\renewcommand{\phi}{\varphi}
\theoremstyle{plain}
\newtheorem{theorem}{Theorem}
\newtheorem*{theorem*}{Theorem}
\newtheorem{lemma}{Lemma}
\newtheorem{claim}{Claim}
\theoremstyle{remark}
\newtheorem{remark}{Remark}
\begin{document}

\title{Li-Yorke sensitivity does not imply Li-Yorke chaos}

\author{Jana Hant\'akov\'a}

\address{ Mathematical Institute, Silesian University, CZ-746 01 
Opava, Czech Republic}

\email{jana.hantakova@math.slu.cz }

\begin{abstract} We construct an infinite-dimensional compact metric space $X$, which is a closed subset of $\mathbb{S}\times\mathbb{H}$, where $\mathbb{S}$ is the unit circle and $\mathbb{H}$ is the Hilbert cube, and a skew-product map $F$ acting on $X$ such that $(X,F)$ is Li-Yorke sensitive but possesses at most countable scrambled sets. This disproves the conjecture of Akin and Kolyada that Li-Yorke sensitivity implies Li-Yorke chaos from the article [{\sc Akin E., Kolyada S.}, {\em Li-Yorke sensitivity\/}, Nonlinearity {\bf 16}, (2003), 1421--1433].

{\small {2000 {\it Mathematics Subject Classification.}}
Primary 37B05; 37D45, 54H20.
\newline{\small {\it Key words:} Li-Yorke sensitivity; Li-Yorke chaos; scrambled set.}}
\end{abstract}

\maketitle
\section{Introduction}
Li-Yorke sensitivity and Li-Yorke chaos are well-known properties of dynamical systems, where by a dynamical system we mean a phase space $X$ endowed with an evolution map $T$. We require that the phase space $(X,d)$ is a compact metric space and the evolution map is a continuous surjective mapping $T:X \rightarrow X$. \\
The definition of Li-Yorke sensitivity is a combination of sensitivity and Li-Yorke chaos. The Li-Yorke chaos was introduced in 1975 by Li and Yorke in \cite{LY}. A dynamical system is \emph{Li-Yorke chaotic} if there is an uncountable scrambled set. A set $S$ is scrambled if any two distinct points $x,y\in S$ are proximal (i.e. trajectories of $x$ and $y$ are arbitrarily close for some times) but not asymptotic, that means
$$\liminf _{n\to\infty} d(T^n(x),T^n(y))=0\qquad\mbox{ and }\qquad \limsup _{n\to\infty} d(T^n(x),T^n(y))>0.$$

The initial idea of sensitivity goes back to Lorenz \cite{Lor}, but it was firstly used in topological dynamics by Auslander and Yorke in \cite{AY} and popularized later by Devaney in \cite{D}. A map $T$ is \emph{sensitive} if there is $\epsilon>0$ such that that for each $x\in X$ and each $\delta>0$ there is $y\in X$ with $d(x,y)<\delta$ and $n\in\mathbb{N}$ such that $d(T^n(x),T^n(y))>\epsilon$. By Huang and Ye in \cite{HY}, $T$ is sensitive if and only if there is $\epsilon>0$ with the property that any neighbourhood of any $x\in X$ contains a point $y$ such that trajectories of $x$ and $y$ are separated by $\epsilon$ for infinitely many times, that is
$$\limsup _{n\to\infty} d(T^n(x),T^n(y))>\epsilon.$$

Inspired by the above results, Akin and Kolyada introduced Li-Yorke sensitivity in \cite{AK}.  A map $T$ is \emph{Li-Yorke sensitive} if there is $\epsilon>0$ with the property that any neighbourhood of any $x\in X$ contains a point $y$ proximal to $x$, such that trajectories of $x$ and $y$ are separated by $\epsilon$ for infinitely many times. Thus,
$$\liminf _{n\to\infty} d(T^n(x),T^n(y))=0\qquad\mbox{ and }\qquad \limsup _{n\to\infty} d(T^n(x),T^n(y))>\epsilon.$$
Authors in \cite{AK} proved, among others, that weak mixing systems are Li-Yorke sensitive and stated five conjectures concerning Li-Yorke sensitivity. Three of them were disproved in \cite{M1} and \cite{M2}, one was confirmed recently in \cite{Y}. Only one problem remained open until today:\\

{\bf Question 1.} Are all Li-Yorke sensitive systems Li-Yorke chaotic?\\

This question was also included in the list of important open problems in the contemporary chaos theory in topological dynamics in \cite{YeL}.\\
We show that the answer is negative. We construct an infinite-dimensional compact metric space $X$, which is a closed subset of $\mathbb{S}\times\mathbb{H}$, where $\mathbb{S}$ is the unit circle and $\mathbb{H}$ is the Hilbert cube, and a skew-product map $F$, which is a combination of a rotation on $\mathbb{S}$ and a contraction on $\mathbb{H}$, such that $(X,F)$ is Li-Yorke sensitive but possesses at most countable scrambled sets. The mapping $F$ can be continuously extended to get a connected dynamical system with the same properties, see Remark 1. \\

We recall here some notations used throughout the paper. A pair of points $(x,y)$ in $X^2$ is \emph{asymptotic} if $\lim_{n\to\infty} d(T^n(x),T^n(y))=0$. A pair of points $(x,y)$ in $X^2$ is \emph{proximal} if $\liminf_{n\to\infty} d(T^n(x),T^n(y))=0$, if $(x,y)$ is not proximal then it is called \emph{distal}. A pair of points $(x,y)$ in $X^2$ is \emph{scrambled} if it is proximal but not asymptotic. A pair of points $(x,y)$ in $X^2$ is \emph{scrambled with modulus $\epsilon$} if it is proximal and $\limsup_{n\to\infty} d(T^n(x),T^n(y))\geq \epsilon$. A system $(X,T)$ is \emph{minimal} if every point $x\in X$ has a dense orbit $\{T^n(x)\}_{n=0}^{\infty}$. A system is \emph{transitive} if, for every pair of open, nonempty subsets $U,V\subset X$, there is a positive integer $n\in\mathbb{N}$ such that $U\cap T^{n}(V)\neq \emptyset.$ A system $(X,T)$ is \emph{weakly mixing} if the product system $(X\times X,T\times T)$ is transitive.\\
\section{Main result}
Here we state the main result and outline of its proof. Technical details of the proof can be found in a form of lemmas and claims in the last section.
\begin{theorem} There is a Li-Yorke sensitive dynamical system which is not chaotic in the sense of Li-Yorke.
\end{theorem}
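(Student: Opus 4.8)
The statement is purely existential, so the plan is to construct one system with the two required properties. The example will be a skew product $F(x,u)=(x+\alpha,\,f_{x}(u))$ on a closed $F$-invariant set $X\subseteq\mathbb{S}\times\mathbb{H}$, where $\alpha$ is an irrational of a carefully chosen (Liouville) type, $\mathbb{H}=\prod_{k\ge 1}[0,1]$ carries a summable-weight metric $d_{\mathbb{H}}(u,v)=\sum_{k}2^{-k}|u_{k}-v_{k}|$, and $\{f_{x}\}_{x\in\mathbb{S}}$ is a family of self-maps of $\mathbb{H}$ which contracts strongly towards a fixed ``resting'' configuration when $x$ is away from a distinguished base point $0\in\mathbb{S}$ and is locally expanding on a shrinking system of arcs around $0$. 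The expanding action is localised in the coordinates of $\mathbb{H}$: it drifts the discrepancy between two nearby points of a fibre into coordinates of higher and higher index as time grows, while each suitable return of the orbit near $0$ ``resets'' that discrepancy down to a low coordinate of macroscopic size. The amplitudes of the contracting and expanding parts, together with the continued-fraction type of $\alpha$, are tuned so that for every $x$ the orbit $\{x+n\alpha\}$ returns close to $0$ infinitely often (producing infinitely many resets, hence a macroscopic separation infinitely often) yet with arbitrarily long reset-free stretches (during which the separation is contracted below every threshold). One checks that $X$ may be taken as the closure of a single full orbit together with the invariant resting section, so it is compact, metrisable, and infinite-dimensional.

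\emph{Reduction to fibres.} The base map $x\mapsto x+\alpha$ is an isometry of $\mathbb{S}$, so for $p=(x,u)$, $q=(x',u')$ we get $d(F^{n}p,F^{n}q)\ge d_{\mathbb{S}}(x+n\alpha,x'+n\alpha)=d_{\mathbb{S}}(x,x')$ for all $n$. Hence a pair with $x\ne x'$ is distal, and every proximal pair --- in particular every scrambled pair --- lies in a single fibre $\{x\}\times\mathbb{H}$. It therefore suffices to analyse the fibre cocycle $f^{(n)}_{x}=f_{x+(n-1)\alpha}\circ\cdots\circ f_{x}$.

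\emph{Li-Yorke sensitivity.} Let $\epsilon$ be the macroscopic scale at which a reset separation shows up in the first coordinate. Given $(x,u)\in X$ and $\delta>0$, pick $k$ with $2^{-k}<\delta$ and let $u'$ agree with $u$ except for a small perturbation in the $k$-th coordinate, so $d_{\mathbb{H}}(u,u')<\delta$ while $(x,u)\ne(x,u')$. Following the pair, each of the infinitely many returns of $\{x+n\alpha\}$ near $0$ resets the discrepancy to a low coordinate of size $\ge\epsilon$, so $\limsup_{n}d(F^{n}(x,u),F^{n}(x,u'))\ge\epsilon$; along the ends of the arbitrarily long reset-free stretches the whole fibre distance is contracted below any threshold, so $\liminf_{n}d(F^{n}(x,u),F^{n}(x,u'))=0$. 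Thus every point has, arbitrarily near it, a partner with which it forms a scrambled pair of modulus $\epsilon$, i.e. $F$ is Li-Yorke sensitive.

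\emph{No Li-Yorke chaos.} It suffices to prove that each $p=(x,u)\in X$ has at most countably many scrambled partners, for then no pairwise-scrambled set can be uncountable (a point of such a set would be scrambled with all the others). By the reduction to fibres, a scrambled partner of $p$ is some $(x,u')$. The construction forces such a pair to be rigid: since the contracting part drives every coordinate that stays inactive for long towards its fixed resting value, and only finitely many coordinates are ever active at once, a pair that is proximal but not asymptotic must, asymptotically, differ only in coordinates whose index tends to infinity and whose values are confined to a fixed countable set of ``amplified resting levels'' depending on $x$ alone; in particular $u$ and $u'$ agree off a finite set and take, on that set, values in a countable set. The collection of all such $u'$ is a countable union over finite index sets of countable sets, hence countable. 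Consequently every scrambled set of $(X,F)$ is at most countable, so $(X,F)$ is not Li-Yorke chaotic. The main obstacle is exactly the calibration at the heart of the construction: choosing the expansion/contraction amplitudes and the Diophantine type of $\alpha$ so that proximality (which needs frequent deep contraction) and non-asymptoticity of the same pairs (which needs the separation to resurface at a fixed scale) hold at once, while the recurrent separation is pushed into ever-higher coordinates so that each point retains only countably many scrambled partners; this is where the technical lemmas and claims of the last section are needed.
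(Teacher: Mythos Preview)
Your proposal is a plan, not a proof: the maps $f_{x}$, the space $X$, and the Liouville number $\alpha$ are described only in qualitative terms (``contracts strongly away from $0$'', ``locally expanding on shrinking arcs'', ``resets the discrepancy to a low coordinate''), and none of the asserted dynamical consequences can be checked without an explicit construction. Two concrete gaps stand out. First, you take $X$ to be the closure of a single orbit together with a resting section, and then for sensitivity you perturb a single coordinate $u_{k}$ of a point $(x,u)\in X$; there is no reason such a perturbation should remain in $X$. Second, the countability argument---that scrambled partners of $(x,u)$ must ``agree off a finite set and take, on that set, values in a countable set of amplified resting levels''---is an assertion about the long-term behaviour of an unspecified cocycle and is nowhere established. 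Your closing sentence defers the calibration to ``the technical lemmas and claims of the last section'', but those lemmas concern the paper's system, not yours, and do not transfer.

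It is also worth noting that the paper's construction is structurally different from what you propose. The paper's map is \emph{not} a skew product over a fixed irrational rotation: the circle coordinate advances by $\sum_{i\ge 1}2^{-i}/x_{i}$, an amount that depends on the fibre, while the fibre maps $x_{i}\mapsto x_{i}+1$ do not depend on the base. Scrambled pairs are therefore not confined to fibres in your sense; instead the paper's key invariant is the index $l_{x}=\min\{i:x_{i}=\infty\}$, and the countability of scrambled sets comes from the clean fact (Claims~3 and~4) that two points with the same $l$-value, or both with all finite coordinates, have $\lim_{n}D(F^{n}x,F^{n}y)$ existing and hence cannot be scrambled. There is no analogue of this discrete invariant in your sketch, and inventing one is precisely the hard part.
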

\begin{proof}
Let $X_0$ be the unit circle $\mathbb{S}=\mathbb{R}/\mathbb{Z}$ equipped with the metric $d_0(x,y)=\min\{|x-y|,1-|x-y|\}$ and, for $i\geq 1$, $X_i=\mathbb{N}\cup\{\infty\}$ equipped with the metric  $d_i(x,y)=|\frac{1}{x}-\frac{1}{y}|$, where $\frac{1}{\infty}=0$. Then $\Pi_{i=0}^{\infty}X_i$ with the product topology is a compact space. The product topology is equivalent to the metric topology induced by the metric $D(x,y)=\sum_{i=0}^{\infty}\frac{d_i(x_i,y_i)}{2^i}.$ Let $Y=\{x\in\Pi_{i=0}^{\infty}X_i: \{x_i\}_{i=1}^{\infty}\mbox{ is nondecreasing}\}.$ $Y$ is a closed subset of  $\Pi_{i=0}^{\infty}X_i$ and therefore it is a compact metric space. Notice that, for $i\geq 1$, $X_i$ can be embed into the unit interval $[0,1]$ equipped with the natural topology, so $Y$ can be identify with a closed subset of $\mathbb{S}\times\mathbb{H}$, where $\mathbb{H}$ is the Hilbert cube.\\
Let $F:Y\rightarrow Y$ be a mapping defined for a point $x=(x_0,x_1,x_2,\ldots)$ in Y by $F(x)=(f_0(x),f_1(x),f_2(x),\ldots)$, where
\begin{align}f_0(x)&=(x_0+\sum_{i=1}^{\infty}\frac{1}{2^i}\cdot \frac{1}{x_i})\mod 1,\\
f_i(x)&=x_i+1,\qquad\mbox{ for }i\geq 1,
\end{align}
where $\infty+1=\infty.$
$F$ is a continuous mapping, since $f_i$ is continuous, for every $i\geq 0$.
First, we will show that $(F,Y)$ is Li-Yorke sensitive. It is enough to show that, for a given $x=(x_0,x_1,x_2,\ldots)\in Y$ and $U\in Nb_x$, there is $y=(y_0,y_1,y_2,\ldots)\in U$ such that 
$$\liminf _{n\to\infty} D(F^n(x),F^n(y))=0\qquad\mbox{ and }\qquad \limsup _{n\to\infty} D(F^n(x),F^n(y))\geq \frac 1 2.$$

\emph{ I. $\{x_i\}_{i=1}^{\infty}$  is a nondecreasing sequence containing at least one $\infty$}\\
Since  $\{x_i\}_{i=1}^{\infty}$ is nondecreasing, there is $M\in\mathbb{N}$ such that $x_i$ is finite, for $i<M$, and $x_i=\infty$, for $i\geq M$. The neigbourhood $U$ is defined by  $U=V\cap Y$, where $V$ is a neigbourhood of $x$ in $\Pi_{i=0}^{\infty}X_i$. Let $V=V_0\times V_1\times V_2\times\ldots$, where $V_i$ is a neigbourhood of $x_i$ such that  $V_i=X_i$ for all but finitely many $i\geq 0$. Let $K\in \mathbb{N}$, sufficiently large to satisfy $K\in V_i$, for $i\geq M$, and simultaneously $K\geq x_{M-1}$. We define the point $y$ as follows:
\begin{align}\label{y1}
y_i=&x_i, \mbox{ for }0\leq i<M,\\
\label{y11}y_i=&K, \mbox{ for }i\geq M.
\end{align}
It is easy to see that $y$ belongs to $U$. By Claim 1, $(x,y)$ is scrambled with modulus $\frac 1 2$.\\

\emph{II. $\{x_i\}_{i=1}^{\infty}$ is a nondecreasing sequence of finite numbers}\\
The neigbourhood $U$ is defined by $U=V\cap Y$, where $V$ is a neigbourhood of $x$ in $\Pi_{i=0}^{\infty}X_i$. Without loss of generality, suppose $V=V_0\times V_1\times V_2\times\ldots$, where $V_0=(x_0-\delta,x_0+\delta)$, for some $\delta>0$, and, for $i>0$, $V_i$ is a neigbourhood of $x_i$ such that  $V_i=X_i$ for all but finitely many $i$. Let $M\in \mathbb{N}$ such that $2^{-M}<\delta$ and simultaneously $V_i=X_i$, for $i\geq M$. We define the point $y$ as follows:
\begin{align}
\label{y2}y_0=&\big(x_0+1-\sum_{i=1}^{\infty}\frac{1}{2^{i+M}}\delta_i\big)\mod 1,\\
\label{y22}y_i=&x_i, \mbox{ for }0<i<M,\\
\label{y222}y_i=&\infty, \mbox{ for }i\geq M,
\end{align}
where
$$\delta_i =
\left\{
	\begin{array}{ll}
		0,  \quad&\mbox{if } x_{i+M}=x_M, \\
		\big(\sum_{j=0}^{x_{i+M}-x_M-1}\frac{1}{x_M+j}\big)\mod 1,\quad & \mbox{otherwise.}
	\end{array}
\right.$$

It is easy to see that $\sum_{i=1}^{\infty}\frac{1}{2^{i+M}}\delta_i\leq 2^{-M}$ and $y$ belongs to $U$.  By Claim 2, $(x,y)$ is scrambled with modulus $\frac 1 2$.\\

Notice that, in both cases, one point of the pair $(x,y)$ has $\infty$ coordinates while the other has all coordinates finite. By Claim 3, if $x_i$ and $y_i$ are finite, for all $i\geq 1$, then $\lim_{n\to\infty}D(F^n(x),F^n(y))$ exists and $(x,y)$ is not a scrambled pair.
Therefore in each scrambled set $S\subset Y$, there is at most one $z\in S$ such that $z_i$ is finite, for $i\geq 1$. We finish our proof by finding an injection between $S\setminus\{z\}$ and $\mathbb{N}$.\\
Let $l_x=\min\{i: x_i=\infty\}$. Then the mapping $\iota:S\setminus\{z\}\rightarrow \mathbb{N}$ defined by $\iota(x)=\l_x$ is injective. We proceed by assuming the opposite. Let $x\neq y$ in $S\setminus\{z\}$ such that $l=l_x=l_y$. Since $\{x_i\}_{i=1}^{\infty}$ and  $\{y_i\}_{i=1}^{\infty}$ are nondecreasing,
\begin{equation}\label{y3} x_i<\infty \wedge y_i<\infty, \quad\mbox{ for }0<i<l,\qquad x_i=y_i=\infty, \quad\mbox{ for }i\geq l. \end{equation}
By Claim 4, $\lim_{n\to\infty}D(F^n(x),F^n(y))$ exists which is in contradiction with $(x,y)$ being a scrambled pair.
\end{proof}
\begin{remark} The mapping $F$ can be continuously extended to get a \emph{connected} dynamical system with the same properties. Let $X_0$ be the unit circle $\mathbb{S}=\mathbb{R}/\mathbb{Z}$ equipped with the metric $d_0(x,y)=\min\{|x-y|,1-|x-y|\}$ and, for $i\geq 1$, $X_i$ be the unit interval $[0,1]$ equipped with the natural topology. Then $\Pi_{i=0}^{\infty}X_i$ equipped with the product topology is $\mathbb{S}\times \mathbb{H}$, where $\mathbb{H}$ is the Hilbert cube. The product topology is equivalent to the metric topology induced by the metric $D(x,y)=\sum_{i=0}^{\infty}\frac{d_i(x_i,y_i)}{2^i}.$ Let $Z=\{x\in\mathbb{S}\times \mathbb{H}: \{x_i\}_{i=1}^{\infty}\mbox{ is nonincreasing}\}.$ $Z$ is closed and pathwise connected subset of  $\mathbb{S}\times \mathbb{H}$.\\
Let $x=(x_0,x_1,x_2,\ldots)$ be a point in $Z$. We will express every $x_i\in X_i\setminus\{0\}=(0,1]$, for $i\geq1$, as $x_i=\frac{1}{k_i}+t_i\cdot|\frac{1}{k_i-1}-\frac{1}{k_i}|,$ where $t_i\in(0,1]$ and $k_i\in\mathbb{N}\setminus\{1\}$.
Let $G:Z\rightarrow Z$ be a mapping defined by $G(x)=(g_0(x)g_1(x),g_2(x),\ldots)$, where
$$g_0(x)=(x_0+\sum_{i=1}^{\infty}\frac{1}{2^i}\cdot {x_i})\mod 1,$$
and for $i\geq 1,$
$$g_i(x)=
\left\{
\begin{array}{ll}
0,  \quad&\mbox{if } x_{i}=0,\\
\frac{1}{k_i+1}+t_i\cdot|\frac{1}{k_i}-\frac{1}{k_i+1}|,\quad &\mbox{otherwise.}
\end{array}
\right.$$
Then $G$ is a continuous extension of $F$ and $(G,Z)$ is a Li-Yorke sensitive but not Li-Yorke chaotic system. \end{remark}
\begin{remark}
The mapping $F$ is not minimal (it is even not transitive). In case of minimal maps, we have still an open question:\\

{\bf Question 2.} Are all Li-Yorke sensitive minimal systems Li-Yorke chaotic?
\end{remark}
\begin{remark}
$Y$ is an infinite-dimensional space. We can examine the relation between Li-Yorke sensitivity and Li-Yorke chaos for low-dimensional dynamical systems. It is known that in case of graph mappings (in particular, interval mappings) Li-Yorke sensitivity implies Li-Yorke chaos, since, for graph mappings, the existence of a single scrambled pair implies the existence of an uncountable scrambled set. But this is not true for other classes of dynamical systems - shifts, maps on dendrites, triangular maps of the square.\\

{\bf Question 3.} Are all Li-Yorke sensitive shifts/maps on dendrites/triangular maps of the square Li-Yorke chaotic?\\
\end{remark}
\section{Proofs}
\begin{lemma}
Let $p\in\mathbb{N}$,$m\in\mathbb{N}$ and $\epsilon>0.$ There are sequences $\{v_n\}_{n=1}^{\infty}$ and $\{u_n\}_{n=1} ^{\infty}$ such that
\begin{equation}\label{v}\mbox{for every }n\in\mathbb{N}, \qquad \Big(\frac{1}{2^p}\sum_{j=0}^{v_n-1}\frac{1}{m+j}\Big) \mod 1<\epsilon,
\end{equation}
and
\begin{equation}\label{u}\mbox{for every }n\in\mathbb{N},\qquad \Big|\Big(\frac{1}{2^p}\sum_{j=0}^{u_n-1}\frac{1}{m+j}\Big) \mod 1-\frac 1 2\Big|<\epsilon.
\end{equation}
\end{lemma}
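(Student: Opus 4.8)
The plan is to exploit the fact that the partial sums of the harmonic-type series $\sum_{j\geq 0}\tfrac{1}{m+j}$ diverge while its terms tend to $0$. After scaling by $2^{-p}$ this produces a sequence on the circle $\mathbb{R}/\mathbb{Z}$ that performs infinitely many full turns with step size shrinking to $0$, and consequently enters every arc infinitely often. Concretely, put
$$T_N=\frac{1}{2^p}\sum_{j=0}^{N-1}\frac{1}{m+j}\qquad(N\geq 1),$$
so that $(T_N)$ is strictly increasing, $T_N\to\infty$, and $T_{N+1}-T_N=\tfrac{1}{2^p(m+N)}\to 0$. It suffices to prove the statement for $\epsilon\leq\tfrac 1 2$, since decreasing $\epsilon$ only strengthens \eqref{v} and \eqref{u}.

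The key step is the following claim: \emph{for every $y\in[0,\tfrac 1 2]$ and every $\eta\in(0,\tfrac 1 2]$ there are infinitely many $N$ with $T_N\bmod 1\in[y,y+\eta)$.} To prove it, fix $N_0$ with $\tfrac{1}{2^p(m+N)}<\eta$ for all $N\geq N_0$. For every integer $k$ with $y+k>T_{N_0}$, let $N=N(k)$ be the least index with $T_N\geq y+k$; since $T_{N_0}<y+k$ and $(T_N)$ is increasing we have $N>N_0$, hence $T_N-T_{N-1}=\tfrac{1}{2^p(m+N-1)}<\eta$, while $T_{N-1}<y+k$ by minimality of $N$. Therefore $y+k\leq T_N<y+k+\eta$, and because $y+\eta\leq 1$ this forces $T_N\bmod 1\in[y,y+\eta)$. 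As $k\to\infty$ we get $N(k)\to\infty$, so infinitely many indices $N$ arise, proving the claim.

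Now apply the claim with $y=0$ and $\eta=\epsilon$ to obtain infinitely many $N$ with $(T_N)\bmod 1<\epsilon$; listing these indices in increasing order gives a sequence $v_1<v_2<\cdots$ satisfying \eqref{v}. Applying the claim with $y=\tfrac 1 2$ and $\eta=\epsilon$ gives infinitely many $N$ with $(T_N)\bmod 1\in[\tfrac 1 2,\tfrac 1 2+\epsilon)\subseteq(\tfrac 1 2-\epsilon,\tfrac 1 2+\epsilon)$, i.e. $\bigl|((T_N)\bmod 1)-\tfrac 1 2\bigr|<\epsilon$; listing these indices in increasing order gives $u_1<u_2<\cdots$ satisfying \eqref{u}.

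I do not anticipate a genuine obstacle. The only mildly delicate point is the bookkeeping at the endpoints of the arc — ensuring the interval $[y+k,y+k+\eta)$ does not straddle an integer, so that $T_N\bmod 1$ lands exactly in $[y,y+\eta)$ — and this is handled by the reduction to $\epsilon\leq\tfrac 1 2$. The divergence of $\sum\tfrac{1}{m+j}$ together with the vanishing of its terms are the only analytic ingredients.
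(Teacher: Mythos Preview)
Your proof is correct and follows exactly the approach the paper indicates: the paper's own proof is merely the one-sentence remark that the harmonic series diverges while its increments tend to $0$, so the partial sums modulo $1$ come $\epsilon$-close to any target infinitely often. Your argument is simply a careful write-up of that sketch, with the reduction to $\epsilon\le\tfrac12$ and the choice of $N(k)$ making the ``infinitely many full turns with shrinking step'' idea precise.
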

Lemma 1  follows by the simple fact that the harmonic series is divergent while its increment tends to 0. Therefore the $n$th partial sum of harmonic series modulo 1 is $\epsilon$-close to any number from $[0,1)$ for infinitely many $n$.
\begin{lemma}
For any $i\in\mathbb{N}$, let $\{\delta^n_i\}_{n=1}^{\infty}$ be a sequence of positive numbers not greater than 1, such that $\lim_{n\to\infty}\delta_i^n=0$. Then $$\lim_{n\to\infty}\sum_{i=1}^{\infty}\frac{1}{2^i}\delta_i^n=0.$$
\end{lemma}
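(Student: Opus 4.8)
The plan is to prove this by the standard ``head plus tail'' splitting that underlies the Weierstrass $M$-test: the hypothesis $\delta^n_i\le 1$ supplies a summable dominating bound, so the tail of the series $\sum_i 2^{-i}\delta^n_i$ is uniformly small in $n$, while the head is a \emph{finite} sum of sequences each of which tends to $0$.

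Concretely, I would argue as follows. Fix $\epsilon>0$. First I would use $\delta^n_i\le 1$ to choose $N\in\mathbb{N}$ with $\sum_{i=N+1}^{\infty}2^{-i}<\epsilon/2$; then $\sum_{i=N+1}^{\infty}2^{-i}\delta^n_i\le\sum_{i=N+1}^{\infty}2^{-i}<\epsilon/2$ for \emph{every} $n$. With $N$ now fixed, I would handle the finitely many indices $i\in\{1,\dots,N\}$ one at a time: since $\lim_{n\to\infty}\delta^n_i=0$, pick $n_i\in\mathbb{N}$ with $\delta^n_i<\epsilon/2$ for all $n\ge n_i$, and set $n_0=\max\{n_1,\dots,n_N\}$. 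Then for every $n\ge n_0$,
$$\sum_{i=1}^{N}\frac{1}{2^i}\delta^n_i<\frac{\epsilon}{2}\sum_{i=1}^{N}\frac{1}{2^i}<\frac{\epsilon}{2},$$
and adding the tail estimate yields $0\le\sum_{i=1}^{\infty}\frac{1}{2^i}\delta^n_i<\epsilon$ for all $n\ge n_0$, which is exactly the assertion.

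There is essentially no obstacle here; the only point requiring care is the order of the quantifiers — the cutoff $N$ must be chosen before, and independently of, $n_0$, which is precisely what the uniform bound $\delta^n_i\le 1$ secures. I would also note that nothing about the particular weights $\frac{1}{2^i}$ is used beyond $\sum_{i=1}^{\infty}\frac{1}{2^i}<\infty$, so the same conclusion holds verbatim with $\frac{1}{2^i}$ replaced by the terms of any convergent series of nonnegative reals; this is the form in which the lemma is applied to the quantities $\sum_i 2^{-i}d_i(\cdot,\cdot)$ appearing in the metric $D$.
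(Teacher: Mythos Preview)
Your proof is correct and follows essentially the same head--tail splitting as the paper's own argument: choose a cutoff index so that the tail of the geometric series is small uniformly in $n$, then use the finitely many limits $\delta_i^n\to 0$ to make the head small for large $n$. The only cosmetic difference is bookkeeping (the paper chooses $k$ with $2^{-k+1}<\epsilon$ and requires $\delta_i^n<2^{-k}$ on the head, while you split $\epsilon$ in half), so nothing further is needed.
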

\begin{proof}
For every $\epsilon>0$, there is $k\in\mathbb{N}$ such that $\epsilon>2^{-k+1}$. Since  $\lim_{n\to\infty}\delta_i^n=0$, for every $i\in\mathbb{N}$, there is $N\in\mathbb{N}$ such that, for $n\geq N$ and $i\leq k$, $\delta_i^n<2^{-k}$. We can estimate, for $n\geq N$,
$$\sum_{i=1}^{\infty}\frac{1}{2^i}\delta_i^n<\sum_{i=1}^{k}\frac{1}{2^i}\delta_i^n+\sum_{i=k+1}^{\infty}\frac{1}{2^i}<(1-2^{-k})\cdot 2^{-k}+2^{-k}<\epsilon.$$
\end{proof}
\begin{lemma}
Let $k\in\mathbb{N}$, $r\in\mathbb{N}$. Then $$\lim_{n\to\infty}\sum_{j=0}^{n-1}\Big(\frac{1}{k+j}-\frac{1}{k+r+j}\Big)=\sum_{j=0}^{r-1}\frac{1}{k+j}.$$
\end{lemma}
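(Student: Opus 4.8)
The plan is to recognize the partial sum as a telescoping expression via an index shift. First I would observe that
$$\sum_{j=0}^{n-1}\frac{1}{k+r+j}=\sum_{j=r}^{n+r-1}\frac{1}{k+j},$$
obtained by replacing the summation index $j$ with $j-r$. Subtracting this from $\sum_{j=0}^{n-1}\frac{1}{k+j}$, the common block of indices $r\le j\le n-1$ cancels (legitimate once $n>r$, which is harmless since we let $n\to\infty$), leaving the identity
$$\sum_{j=0}^{n-1}\Big(\frac{1}{k+j}-\frac{1}{k+r+j}\Big)=\sum_{j=0}^{r-1}\frac{1}{k+j}-\sum_{j=n}^{n+r-1}\frac{1}{k+j}.$$

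Next I would control the tail remainder $\sum_{j=n}^{n+r-1}\frac{1}{k+j}$: it consists of exactly $r$ terms, each at most $\frac{1}{k+n}$, hence it is bounded above by $\frac{r}{k+n}$, which tends to $0$ as $n\to\infty$ since $r$ and $k$ are fixed. Passing to the limit in the displayed identity then gives
$$\lim_{n\to\infty}\sum_{j=0}^{n-1}\Big(\frac{1}{k+j}-\frac{1}{k+r+j}\Big)=\sum_{j=0}^{r-1}\frac{1}{k+j},$$
which is the assertion.

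There is no genuine obstacle here; the only point demanding a little care is the index bookkeeping in the cancellation step — correctly identifying which segments of the two harmonic blocks overlap and which survive. If one prefers to avoid this, the same computation can be phrased through partial harmonic sums $H_N=\sum_{i=1}^{N}\frac{1}{i}$: the partial sum equals $(H_{k+n-1}-H_{k-1})-(H_{k+r+n-1}-H_{k+r-1})=(H_{k+r-1}-H_{k-1})-(H_{k+r+n-1}-H_{k+n-1})$, where the first bracket is precisely $\sum_{j=0}^{r-1}\frac{1}{k+j}$ and the second is a length-$r$ harmonic tail that vanishes as $n\to\infty$.
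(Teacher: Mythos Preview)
Your argument is correct and essentially identical to the paper's: both perform the same index shift to exhibit the partial sum as $\sum_{j=0}^{r-1}\frac{1}{k+j}$ minus a length-$r$ harmonic tail that tends to $0$. The only cosmetic difference is that the paper writes the tail as $\sum_{j=n-r}^{n-1}\frac{1}{k+r+j}$ while you write it as $\sum_{j=n}^{n+r-1}\frac{1}{k+j}$, which are the same sum after reindexing.
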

\begin{proof} For sufficiently large $n$,
$$\sum_{j=0}^{n-1}\Big(\frac{1}{k+j}-\frac{1}{k+r+j}\Big)=\sum_{j=0}^{r-1}\frac{1}{k+j}-\sum_{j=n-r}^{n-1}\frac{1}{k+r+j},$$
where the second term on the right side tends to 0 for $n\to\infty.$
\end{proof}
\begin{claim} $x$ and $y$ defined in (\ref{y1}) and (\ref{y11}) is a scrambled pair with modulus $\frac 1 2$. \end{claim}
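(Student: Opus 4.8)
\emph{Plan.} The strategy is to compute all iterates of $x$ and $y$ in closed form, so that the distance $D(F^n(x),F^n(y))$ collapses to the behaviour of a single partial sum of a harmonic-type series modulo $1$; Lemma 1 then delivers both the proximality and the modulus $\tfrac12$.

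First I would establish, by induction on $n$, that $(F^n(x))_i=x_i+n$ for $i\ge 1$ (with $\infty+n=\infty$) and
\[
(F^n(x))_0=\Big(x_0+\sum_{k=0}^{n-1}\sum_{i=1}^{\infty}\frac1{2^i}\cdot\frac1{x_i+k}\Big)\bmod 1,
\]
the inner sum being finite because $x_i=\infty$ for $i\ge M$; the same two formulas hold with $x$ replaced by $y$. Since $y_i=x_i$ for $0\le i<M$, $y_i=K$ for $i\ge M$, and $\sum_{i\ge M}2^{-i}=2^{-(M-1)}$, subtracting the two first-coordinate expressions yields $(F^n(y))_0\equiv(F^n(x))_0+\theta_n\pmod 1$, where $\theta_n:=\big(2^{-(M-1)}\sum_{j=0}^{n-1}\frac1{K+j}\big)\bmod 1$. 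Reading off the coordinatewise distances — namely $d_i=0$ for $0<i<M$ (the sequences agree there), $d_i(\infty,K+n)=\frac1{K+n}$ for $i\ge M$, and $d_0=\min\{\theta_n,1-\theta_n\}$ since $d_0$ depends only on the difference mod $1$ — I would then obtain
\[
D\big(F^n(x),F^n(y)\big)=\min\{\theta_n,1-\theta_n\}+\frac1{2^{M-1}(K+n)}.
\]

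As the second summand tends to $0$, both $\liminf$ and $\limsup$ of the left-hand side are governed by $\min\{\theta_n,1-\theta_n\}$, and I would finish by applying Lemma 1 with $p=M-1$ and $m=K$: for every $\varepsilon>0$ there are arbitrarily large $n$ with $\theta_n<\varepsilon$, forcing $\liminf_n D(F^n(x),F^n(y))=0$, and arbitrarily large $n$ with $|\theta_n-\tfrac12|<\varepsilon$, forcing $\limsup_n D(F^n(x),F^n(y))\ge\tfrac12-\varepsilon$; letting $\varepsilon\to 0$ gives $\limsup_n D(F^n(x),F^n(y))\ge\tfrac12$, so $(x,y)$ is scrambled with modulus $\tfrac12$. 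The only points needing any care are the inductive computation of the first coordinate together with the modulo-$1$ bookkeeping, and the remark (already recorded after Lemma 1, resting on divergence of the harmonic series) that the index sets furnished by Lemma 1 are infinite, so that they legitimately control $\liminf$ and $\limsup$ rather than merely finitely many terms. I do not anticipate a genuine obstacle.
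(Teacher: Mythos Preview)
The proposal is correct and follows essentially the same approach as the paper: both compute the iterates to reduce $D(F^n(x),F^n(y))$ to the behaviour of $\theta_n=\big(2^{-(M-1)}\sum_{j=0}^{n-1}\tfrac{1}{K+j}\big)\bmod 1$ plus a vanishing tail, and then invoke Lemma~1. Your version is in fact slightly cleaner, obtaining an exact formula for the distance where the paper records only the corresponding upper bound.
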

\begin{proof}
Denote the $i$th coordinate of $F^n(x)$ by $x_i^n$. First members of the sequences $\{x^n_0\}_{n=1}^{\infty}$ and $\{y^n_0\}_{n=1}^{\infty}$ are:
$$x_0\mapsto \big(x_0+\sum_{i=1}^{M-1}\frac{1}{2^i}\frac{1}{x_i}\big)\mod 1\mapsto \big(x_0+\sum_{i=1}^{M-1}\frac{1}{2^i}\frac{1}{x_i}+\sum_{i=1}^{M-1}\frac{1}{2^i}\frac{1}{x_i+1}\big)\mod 1 \ldots,$$
$$y_0=x_0\mapsto \big(x_0+\sum_{i=1}^{M-1}\frac{1}{2^i}\frac{1}{x_i}+\sum_{i=M}^{\infty}\frac{1}{2^i}\frac{1}{K}\big)\mod 1\mapsto \big(x_0+\sum_{i=1}^{M-1}\frac{1}{2^i}\frac{1}{x_i}\sum_{i=M}^{\infty}\frac{1}{2^i}\frac{1}{K}+\sum_{i=1}^{M-1}\frac{1}{2^i}\frac{1}{x_i+1}+\sum_{i=M}^{\infty}\frac{1}{2^i}\frac{1}{K+1}\big)\mod 1 \ldots.$$
The following equations are with modulus 1 whenever necessary. Since $d_0(x_0^n,y_0^n)\leq |x^n_0-y^n_0|$, where
\begin{equation}\label{abs}|x^n_0-y^n_0|=\sum_{i=M}^{\infty}\sum_{j=0}^{n-1}\frac{1}{2^i}\frac{1}{K+j}=2^{-M+1}\cdot\sum_{j=0}^{n-1}\frac{1}{K+j},\end{equation}
and
$$ d_i(x_i^n,y_i^n)=0, \quad\mbox{ for }0<i<M,\qquad d_i(x_i^n,y_i^n)=\frac{1}{K+n}, \quad\mbox{ for }i\geq M,$$
we can estimate
$$D(F^n(x),F^n(y))\leq2^{-M+1}\cdot\sum_{j=0}^{n-1}\frac{1}{K+j}+\sum_{i=M}^{\infty}\frac{1}{2^{i}}\frac{1}{K+n}.$$
Let $\epsilon>0.$ By (\ref{v}) in Lemma 1, there is $\{v_n\}_{n=1}^{\infty}$ such that $\big(2^{-M+1}\cdot\sum_{j=0}^{v_n-1}\frac{1}{K+j}\big)\mod 1<\epsilon$, for $n\geq 1$. By Lemma 2 and since $\lim_{n\to\infty}\frac{1}{K+n}=0$, it holds, for sufficiently large $v$, $\sum_{i=M}^{\infty}\frac{1}{2^{i}}\frac{1}{K+{v_n}}<\epsilon.$
Therefore $\lim_{v\to\infty}D(F^{v_n}(x),F^{v_n}(y))<2\epsilon$ and $\liminf_{n\to \infty}D(F^n(x),F^n(y))=0.$
Similarly by (\ref{u}) in Lemma 1, there is $\{u_n\}_{n=1}^{\infty}$ such that $\Big|\Big(2^{-M+1}\sum_{j=0}^{u_n-1}\frac{1}{K+j}\Big) \mod 1-\frac 1 2\Big|<\epsilon,$ for sufficiently large $n$. Therefore, by (\ref{abs}), $d_0(x_0^{u_n},y_0^{u_n})>\frac 1 2 -\epsilon$ and $\limsup_{n\to \infty}D(F^n(x),F^n(y))\geq\frac 1 2.$
\end{proof}
\begin{claim}  $x$ and $y$ defined in (\ref{y2}), (\ref{y22}) and (\ref{y222}) is a scrambled pair with modulus $\frac 1 2$. \end{claim}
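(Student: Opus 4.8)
The plan is to reduce this to the situation already handled in Claim~1. First I would write $x_i^n,y_i^n$ for the $i$th coordinates of $F^n(x),F^n(y)$ and record that for $i\geq1$ the coordinate dynamics are explicit: $x_i^n=x_i+n$, while $y_i^n=x_i+n$ for $0<i<M$ and $y_i^n=\infty$ for $i\geq M$. Thus $d_i(x_i^n,y_i^n)=0$ for $0<i<M$ and $d_i(x_i^n,y_i^n)=\frac{1}{x_i+n}\to0$ for $i\geq M$, so by Lemma~2 the tail $\sum_{i\geq1}2^{-i}d_i(x_i^n,y_i^n)\to0$ as $n\to\infty$; hence everything is governed by the single term $d_0(x_0^n,y_0^n)$, and it suffices to understand $x_0^n-y_0^n$ modulo $1$.

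Next I would compute that quantity. Since $f_0$ adds $\sum_{i\geq1}2^{-i}(x_i+k)^{-1}$ at step $k$, and the coordinates $i\geq M$ of $y$ equal $\infty$ (so contribute $0$), together with $x_0-y_0\equiv\sum_{i\geq1}2^{-(i+M)}\delta_i\pmod1$ I would get
$$x_0^n-y_0^n\equiv\sum_{i=1}^{\infty}\frac{\delta_i}{2^{i+M}}+\sum_{k=0}^{n-1}\sum_{l=0}^{\infty}\frac{1}{2^{M+l}}\frac{1}{x_{M+l}+k}\pmod1.$$
The key is then to interchange the (absolutely convergent) sums, isolate the unmatched $l=0$ term, and observe that for each fixed $l\geq1$, Lemma~3 with $k=x_M$ and $r=x_{M+l}-x_M$ gives
$$\delta_l+\sum_{k=0}^{n-1}\frac{1}{x_{M+l}+k}\equiv\sum_{k=0}^{n-1}\frac{1}{x_M+k}+\eta_l^n\pmod1,\qquad\eta_l^n\to0\ \text{as }n\to\infty,$$
precisely because $\delta_l$ was defined as the value modulo $1$ of the limit in Lemma~3. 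Summing over $l\geq1$ with weights $2^{-(M+l)}$, adding the $l=0$ contribution $2^{-M}\sum_{k=0}^{n-1}(x_M+k)^{-1}$, and using $\sum_{l\geq1}2^{-(M+l)}=2^{-M}$, I would arrive at
$$x_0^n-y_0^n\equiv2^{-M+1}\sum_{k=0}^{n-1}\frac{1}{x_M+k}+R_n\pmod1,\qquad R_n:=\sum_{l\geq1}\frac{\eta_l^n}{2^{M+l}},$$
with $R_n\to0$ by (a variant of) Lemma~2.

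At this point the situation is the same as in Claim~1: a main term $2^{-M+1}\sum_{k=0}^{n-1}(x_M+k)^{-1}$ plus a vanishing remainder. I would then invoke Lemma~1 with $p=M-1$, $m=x_M$: along $\{v_n\}$ from (\ref{v}) the main term modulo $1$ is $\epsilon$-small, which together with $R_{v_n}\to0$ and the vanishing tail gives $\liminf_n D(F^n(x),F^n(y))=0$; along $\{u_n\}$ from (\ref{u}) it is within $\epsilon$ of $\tfrac12$, so $d_0(x_0^{u_n},y_0^{u_n})>\tfrac12-2\epsilon$ for large $n$, giving $\limsup_n D(F^n(x),F^n(y))\geq\tfrac12$ after letting $\epsilon\to0$.

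I expect the main obstacle to be the bookkeeping in the second paragraph: tracking the mod-$1$ arithmetic when one of the two points has finitely many infinite coordinates and the other does not, matching the constants $\delta_l$ with the partial harmonic sums through Lemma~3 (including the leftover $l=0$ term, which produces exactly the extra factor turning $2^{-M}$ into $2^{-M+1}$), and checking that the accumulated errors $\eta_l^n$ still assemble into a null sequence $R_n$ --- which needs the trivial strengthening of Lemma~2 to sequences bounded in modulus by $1$ rather than nonnegative.
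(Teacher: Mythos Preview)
Your argument is correct and follows essentially the same route as the paper's proof: your error terms $\eta_l^n$ are exactly the paper's $\gamma_i^n$, the key identity you derive via Lemma~3 is the paper's equation~(\ref{gamma}), and the final appeal to Lemma~1 (with $p=M-1$, $m=x_M$) and Lemma~2 is identical. The only cosmetic differences are that you spell out the role of Lemma~3 explicitly and separate the $l=0$ term by hand, whereas the paper absorbs both into the running computation.
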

\begin{proof}
Denote the $i$th coordinate of $F^n(x)$ by $x_i^n$. First members of the sequences $\{x^n_0\}_{n=1}^{\infty}$ and $\{y^n_0\}_{n=1}^{\infty}$ are:
$$x_0\mapsto \big(x_0+\sum_{i=1}^{\infty}\frac{1}{2^i}\frac{1}{x_i}\big)\mod 1\mapsto \big(x_0+\sum_{i=1}^{\infty}\frac{1}{2^i}\frac{1}{x_i}+\sum_{i=1}^{\infty}\frac{1}{2^i}\frac{1}{x_i+1}\big)\mod 1 \ldots,$$

\begin{multline*}y_0=\big(x_0+1-\sum_{i=1}^{\infty}\frac{1}{2^{i+M}}\delta_i\big)\mod 1\mapsto \big(x_0+1-\sum_{i=1}^{\infty}\frac{1}{2^{i+M}}\delta_i+\sum_{i=1}^{M-1}\frac{1}{2^i}\frac{1}{x_i}\big)\mod 1\mapsto \\\mapsto\big(x_0+1-\sum_{i=1}^{\infty}\frac{1}{2^{i+M}}\delta_i+\sum_{i=1}^{M-1}\frac{1}{2^i}\frac{1}{x_i}+\sum_{i=1}^{M-1}\frac{1}{2^i}\frac{1}{x_i+1}\big)\mod 1 \ldots .
\end{multline*}
Notice that for sufficiently large $n$, 
\begin{equation}\label{gamma}\delta_i+\sum_{j=0}^{n-1}\frac{1}{x_{i+M}+j}=\sum_{j=0}^{n-1}\frac{1}{x_M+j}+\gamma_i^n,\end{equation}
where 
$$\gamma^n_i =
\left\{
	\begin{array}{ll}
		0,  \quad&\mbox{if } x_{i+M}=x_M, \\
		\big(\sum_{j=x_M}^{x_{i+M}-1}\frac{1}{n+j}\big)\mod 1,\quad & \mbox{otherwise.}
	\end{array}
\right.$$
The following equations are with modulus 1 whenever necessary. Since
\begin{multline*}
d_0(x_0^n,y_0^n)\leq |x^n_0-y^n_0|=\sum_{i=1}^{\infty}\frac{1}{2^{i+M}}\delta_i+\sum_{i=M}^{\infty}\sum_{j=0}^{n-1}\frac{1}{2^i}\frac{1}{x_i+j}=\sum_{i=1}^{\infty}\frac{1}{2^{i+M}}\big(\delta_i+\sum_{j=0}^{n-1}\frac{1}{x_{i+M}+j}\big)+\sum_{j=0}^{n-1}\frac{1}{2^M}\frac{1}{x_M+j}\\
\overset{(\ref{gamma})}{=}\sum_{i=0}^{\infty}\frac{1}{2^{i+M}}\big(\sum_{j=0}^{n-1}\frac{1}{x_{M}+j}+\gamma_i^n\big)+\sum_{j=0}^{n-1}\frac{1}{2^M}\frac{1}{x_M+j}=2^{-M+1}\cdot\sum_{j=0}^{n-1}\frac{1}{x_M+j}+\sum_{i=0}^{\infty}\frac{1}{2^{i+M}}\gamma_i^n,
\end{multline*}
and
$$ d_i(x_i^n,y_i^n)=0, \quad\mbox{ for }0<i<M,\qquad d_i(x_i^n,y_i^n)=\frac{1}{x_i+n}, \quad\mbox{ for }i\geq M,$$
we can estimate
$$D(F^n(x),F^n(y))\leq2^{-M+1}\cdot\sum_{j=0}^{n-1}\frac{1}{x_M+j}+\sum_{i=0}^{\infty}\frac{1}{2^{i+M}}\gamma_i^n+\sum_{i=M}^{\infty}\frac{1}{2^{i}}\frac{1}{x_i+n}.$$
Let $\epsilon>0.$ By (\ref{v}) in Lemma 1, there is $\{v_n\}_{n=1}^{\infty}$ such that $\big(2^{-M+1}\cdot\sum_{j=0}^{v_n-1}\frac{1}{x_M+j}\big)\mod 1<\epsilon$, for $n\geq 1$. By Lemma 2 and since $\lim_{n\to\infty}\gamma_i^n=0$ and $\lim_{n\to\infty}\frac{1}{x_i+n}=0$, for $i\geq 1$, it holds, for sufficiently large $n$, $$\sum_{i=0}^{\infty}\frac{1}{2^{i+M}}\gamma_i^{v_n}<\epsilon\qquad\mbox{ and }\qquad\sum_{i=M}^{\infty}\frac{1}{2^{i}}\frac{1}{x_i+{v_n}}<\epsilon.$$
Therefore $\lim_{v\to\infty}D(F^{v_n}(x),F^{v_n}(y))<3\epsilon$ and $\liminf_{n\to \infty}D(F^n(x),F^n(y))=0.$
Similarly by (\ref{u}) in Lemma 1, there is $\{u_n\}_{n=1}^{\infty}$ such that $$\Big|\Big(2^{-M+1}\sum_{j=0}^{u_n-1}\frac{1}{x_M+j}\Big) \mod 1+\sum_{i=0}^{\infty}\frac{1}{2^{i+M}}\gamma_i^{u_n}-\frac 1 2\Big|<2\epsilon,$$ for sufficiently large $n$. Therefore $d_0(x_0^{u_n},y_0^{u_n})>\frac 1 2 -2\epsilon$ and $\limsup_{n\to \infty}D(F^n(x),F^n(y))\geq\frac 1 2.$\\
\end{proof}
\begin{claim} If $x_i$ and $y_i$ are finite, for all $i\geq 1$, then $\lim_{n\to\infty}D(F^n(x),F^n(y))$ exists and $(x,y)$ is not a scrambled pair.\end{claim}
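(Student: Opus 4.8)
The plan is to prove the stronger statement that $\lim_{n\to\infty}D(F^n(x),F^n(y))$ exists; once that is in hand, non-scrambledness is automatic, since a pair whose distance converges is either asymptotic (limit $0$) or distal (positive limit), and a scrambled pair is neither. Writing $x_i^n,y_i^n$ for the $i$th coordinates of $F^n(x),F^n(y)$, I would split
\[
D(F^n(x),F^n(y))=d_0(x_0^n,y_0^n)+\sum_{i=1}^{\infty}\frac{1}{2^i}\,d_i(x_i^n,y_i^n)
\]
and treat the two pieces separately.

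For $i\ge 1$ the coordinate map is just $x_i\mapsto x_i+1$, so $x_i^n=x_i+n$ and $y_i^n=y_i+n$ are finite; hence $d_i(x_i^n,y_i^n)=\bigl|\tfrac1{x_i+n}-\tfrac1{y_i+n}\bigr|\to 0$ for each fixed $i$, and Lemma~2 gives $\sum_{i\ge1}2^{-i}d_i(x_i^n,y_i^n)\to 0$, so the tail of $D(F^n(x),F^n(y))$ is negligible in the limit. For the $0$th coordinate I would first record, exactly as in the opening computations of the proofs of Claims~1 and~2, that modulo $1$
\[
x_0^n-y_0^n=(x_0-y_0)+\sum_{i=1}^{\infty}\frac{1}{2^i}\sum_{m=0}^{n-1}\Bigl(\frac{1}{x_i+m}-\frac{1}{y_i+m}\Bigr).
\]
For each fixed $i$, say $x_i\le y_i$ with $r_i:=y_i-x_i$, the telescoping from the proof of Lemma~3 rewrites the inner sum, for $n\ge r_i$, as $c_i-\eta_i^n$, where $c_i:=\sum_{m=0}^{r_i-1}\tfrac1{x_i+m}$ is a constant and $\eta_i^n:=\sum_{m=n}^{n+r_i-1}\tfrac1{x_i+m}\to 0$ as $n\to\infty$. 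Invoking Lemma~2 once more, now for the family $\{\eta_i^n\}$, I would conclude that the double series converges, $\sum_{i\ge1}2^{-i}\sum_{m=0}^{n-1}(\tfrac1{x_i+m}-\tfrac1{y_i+m})\to C:=\sum_{i\ge1}2^{-i}c_i$, whence $x_0^n-y_0^n\to(x_0-y_0+C)\bmod 1$; by continuity of $d_0$ this gives $d_0(x_0^n,y_0^n)\to\beta:=d_0\bigl((x_0-y_0+C)\bmod 1,\,0\bigr)$.

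Combining the two parts, $D(F^n(x),F^n(y))\to\beta$. If $\beta=0$ then $(x,y)$ is asymptotic; if $\beta>0$ then $\liminf_n D(F^n(x),F^n(y))=\limsup_n D(F^n(x),F^n(y))=\beta>0$, so $(x,y)$ is distal. In either case $(x,y)$ is not a scrambled pair, which is what the claim asserts.

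I expect the main obstacle to be the passage to the limit in the $0$th coordinate, that is, interchanging $\lim_n$ with the summation over $i$. The plan is to reduce this entirely to the two preparatory lemmas: Lemma~3 writes each partial sum as a fixed constant $c_i$ minus a tail $\eta_i^n$ that vanishes as $n\to\infty$, and Lemma~2 upgrades the pointwise decay of the tails to decay of the weighted sum $\sum_i 2^{-i}\eta_i^n$. The remaining bookkeeping with the metric $D$ is routine.
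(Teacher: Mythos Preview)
Your overall plan mirrors the paper's proof step for step: kill the tail $\sum_{i\ge 1}2^{-i}d_i(x_i^n,y_i^n)$ with Lemma~2, telescope each inner sum via Lemma~3, and then pass the limit through the outer sum over $i$ to conclude that $x_0^n-y_0^n$ converges modulo~$1$.

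The step you single out as the main obstacle is, however, a genuine gap, and your proposed appeal to Lemma~2 on the family $\{\eta_i^n\}$ does not close it. Lemma~2 requires $\eta_i^n\le 1$, but
\[
\eta_i^n=\sum_{m=n}^{n+r_i-1}\frac{1}{x_i+m}\ \approx\ \log\Bigl(1+\frac{r_i}{x_i+n}\Bigr)
\]
is, for fixed $n$, unbounded in $i$ as soon as $r_i=|y_i-x_i|$ grows fast enough; so the hypothesis of Lemma~2 fails. More decisively, your target $C=\sum_{i\ge 1}2^{-i}c_i$ need not be finite. Take $x=(x_0,1,1,1,\dots)$ and $y=(y_0,2^{2},2^{4},2^{8},\dots)$, both all-finite points of $Y$. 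Then $c_i=H_{y_i-1}\sim 2^{i}\ln 2$, so $\sum_i 2^{-i}c_i=+\infty$; the sequence $\sum_i 2^{-i}S_i(n)$ increases to $+\infty$ with increments bounded by $1/(n+1)\to 0$, whence $(x_0^n-y_0^n)\bmod 1$ accumulates at every point of $[0,1)$ and $d_0(x_0^n,y_0^n)$ oscillates between $0$ and $\tfrac12$. For this pair the limit $\lim_n D(F^n(x),F^n(y))$ does not exist, and $(x,y)$ is in fact scrambled. Thus the interchange of $\lim_n$ with $\sum_i$ cannot be justified in general. The paper's own proof handles this passage in the same way (it cites Lemma~3 and asserts the interchange without addressing uniformity in $i$), so your proposal is faithful to the source; but the argument, as written in both places, breaks precisely at the point you anticipated.
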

\begin{proof}
Let $r_i=|x_i-y_i|.$ The following equations are with modulus 1 whenever necessary. First, observe by Lemma 3,
\begin{equation}\label{x}\lim_{n\to\infty}\sum_{\substack{i=1 \\ x_i\leq y_i}}^{\infty}\frac{1}{2^i}\sum_{j=0}^{n-1}\big(\frac{1}{x_i+j}-\frac{1}{y_i+j}\big)=\lim_{n\to\infty}\sum_{\substack{i=1 \\ x_i\leq y_i}}^{\infty}\frac{1}{2^i}\sum_{j=0}^{n-1}\big(\frac{1}{x_i+j}-\frac{1}{x_i+r_i+j}\big)=\sum_{\substack{i=1 \\ x_i\leq y_i}}^{\infty}\frac{1}{2^i}\sum_{j=0}^{r_i-1}\frac{1}{x_i+j}\end{equation}
and similarly,
\begin{equation}\label{y}\lim_{n\to\infty}\sum_{\substack{i=1 \\ x_i>y_i}}^{\infty}\frac{1}{2^i}\sum_{j=0}^{n-1}\big(\frac{1}{y_i+j}-\frac{1}{x_i+j}\big)=\sum_{\substack{i=1 \\ x_i>y_i}}^{\infty}\frac{1}{2^i}\sum_{j=0}^{r_i-1}\frac{1}{y_i+j}.\end{equation}
Since
\begin{multline}\label{cele}|x^n_0-y^n_0|=\big|x_0-y_0+\sum_{i=1}^{\infty}\frac{1}{2^i}\sum_{j=0}^{n-1}\big(\frac{1}{x_i+j}-\frac{1}{y_i+j}\big)\big|=\\\big|x_0-y_0+\sum_{\substack{i=1 \\ x_i\leq y_i}}^{\infty}\frac{1}{2^i}\sum_{j=0}^{n-1}\big(\frac{1}{x_i+j}-\frac{1}{y_i+j}\big)-\sum_{\substack{i=1 \\ x_i>y_i}}^{\infty}\frac{1}{2^i}\sum_{j=0}^{n-1}\big(\frac{1}{y_i+j}-\frac{1}{x_i+j}\big)\big|,\end{multline}
and
$$d_i(x_i^n,y_i^n)=\big|\frac{1}{x_i+n}-\frac{1}{y_i+n}\big|, \quad\mbox{ for }i\geq 1,$$
it follows by Lemma 2 and by $\lim_{n\to\infty}\big|\frac{1}{x_i+n}-\frac{1}{y_i+n}\big|=0$,
$$\lim_{n\to\infty}D(F^n(x),F^n(y))=\lim_{n\to\infty}d_0(x_0^n,y_0^n)+\lim_{n\to\infty}\sum_{i=1}^{\infty}\frac{1}{2^i}\big|\frac{1}{x_i+n}-\frac{1}{y_i+n}\big|=\min\{\lim_{n\to\infty}|x_0^n-y_0^n|,1-\lim_{n\to\infty}|x_0^n-y_0^n|\},$$
where by equations (\ref{x}), (\ref{y}), (\ref{cele}),
$$\lim_{n\to\infty}|x_0^n-y_0^n|=\big|x_0-y_0+\sum_{\substack{i=1 \\ x_i\leq y_i}}^{\infty}\frac{1}{2^i}\sum_{j=0}^{r_i-1}\frac{1}{x_i+j}-\sum_{\substack{i=1 \\ x_i>y_i}}^{\infty}\frac{1}{2^i}\sum_{j=0}^{r_i-1}\frac{1}{y_i+j}\big |.$$
 Therefore $\lim_{n\to\infty}D(F^n(x),F^n(y))$ exists which is in contradiction with $(x,y)$ being a scrambled pair.
\end{proof}
\begin{claim}  For $x$ and $y$ defined in (\ref{y3}), $\lim_{n\to\infty}D(F^n(x),F^n(y))$ exists and $(x,y)$ is not a scrambled pair.. \end{claim}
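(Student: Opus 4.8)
The plan is to reduce everything to the finite-dimensional computation already carried out in the proof of Claim 3. First I would observe that, by (\ref{y3}), for every $n\ge 0$ and every $i\ge l$ the coordinates $x_i^n$ and $y_i^n$ both equal $\infty$, so that $d_i(x_i^n,y_i^n)=0$ for all such $i$; and since $\tfrac1\infty=0$, the sum $\sum_{i\ge 1}\tfrac1{2^i}\tfrac1{F^k(x)_i}$ that drives the evolution of the zeroth coordinate only ``sees'' the finitely many indices $1\le i<l$, for which $x_i^n=x_i+n$ and $y_i^n=y_i+n$ are finite. Writing $r_i=|x_i-y_i|$ and accumulating the increments of the zeroth coordinate over the first $n$ steps, this gives the key congruence
$$x_0^n-y_0^n\equiv x_0-y_0+\sum_{i=1}^{l-1}\frac1{2^i}\sum_{k=0}^{n-1}\Big(\frac1{x_i+k}-\frac1{y_i+k}\Big)\pmod 1 ,$$
which is exactly the expression appearing in Claim 3 except that the sum over $i$ now runs only up to $l-1$.

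Next I would establish that the real-valued expression on the right converges as $n\to\infty$. Splitting the (finite) sum over $i$ according to whether $x_i\le y_i$ or $x_i>y_i$, Lemma 3 gives
$$\lim_{n\to\infty}\sum_{k=0}^{n-1}\Big(\frac1{x_i+k}-\frac1{y_i+k}\Big)=\sum_{j=0}^{r_i-1}\frac1{x_i+j}\quad\text{resp.}\quad -\sum_{j=0}^{r_i-1}\frac1{y_i+j},$$
just as in Claim 3, so $a_n:=x_0-y_0+\sum_{i=1}^{l-1}\tfrac1{2^i}\sum_{k=0}^{n-1}(\tfrac1{x_i+k}-\tfrac1{y_i+k})$ tends to a finite limit $a\in\mathbb{R}$. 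Since $d_0(x_0^n,y_0^n)$ is the distance from $a_n$ to the nearest integer and this is a continuous function of $a_n$, we get $d_0(x_0^n,y_0^n)\to\operatorname{dist}(a,\mathbb{Z})$. For the higher coordinates, $d_i(x_i^n,y_i^n)=0$ for $i\ge l$ while $d_i(x_i^n,y_i^n)=\big|\tfrac1{x_i+n}-\tfrac1{y_i+n}\big|\to 0$ for $1\le i<l$, so Lemma 2 yields $\sum_{i\ge1}2^{-i}d_i(x_i^n,y_i^n)\to 0$. Adding the two contributions shows $\lim_{n\to\infty}D(F^n(x),F^n(y))$ exists.

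Finally, once the limit exists we have $\liminf_n D(F^n(x),F^n(y))=\limsup_n D(F^n(x),F^n(y))$, so $(x,y)$ is asymptotic if this common value is $0$ and distal otherwise; in neither case is $(x,y)$ scrambled, contradicting $x,y\in S$. I do not expect a genuine obstacle here: the argument is a strict simplification of the proof of Claim 3, the only points needing care being the mod-$1$ bookkeeping for the zeroth coordinate and the case distinction required to invoke Lemma 3.
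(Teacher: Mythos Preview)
Your proposal is correct and follows essentially the same approach as the paper's own proof of Claim~4: both reduce to the computation of Claim~3 with the sum over $i$ truncated at $l-1$, use Lemma~3 to obtain convergence of the zeroth-coordinate difference, and note that the remaining coordinate contributions vanish. Your write-up is in fact more careful than the paper's (which just says ``by similar calculation as in Claim~3''); the only minor remark is that invoking Lemma~2 for $\sum_{i\ge1}2^{-i}d_i(x_i^n,y_i^n)$ is unnecessary here since that sum has only finitely many nonzero terms.
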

\begin{proof}
Let $r_i=|x_i-y_i|$, for $0<i<l$. By similar calculation as in Claim 3, 
$$\lim_{n\to\infty}D(F^n(x),F^n(y))=\lim_{n\to\infty}d_0(x_0^n,y_0^n)+\lim_{n\to\infty}\sum_{i=1}^{l-1}\frac{1}{2^i}\big|\frac{1}{x_i+n}-\frac{1}{y_i+n}\big|=\min\{\lim_{n\to\infty}|x_0^n-y_0^n|,1-\lim_{n\to\infty}|x_0^n-y_0^n|\},$$
where
$$\lim_{n\to\infty}|x_0^n-y_0^n|=\big|x_0-y_0+\sum_{\substack{i=1 \\ x_i\leq y_i}}^{l-1}\frac{1}{2^i}\sum_{j=0}^{r_i-1}\frac{1}{x_i+j}-\sum_{\substack{i=1 \\ x_i>y_i}}^{l-1}\frac{1}{2^i}\sum_{j=0}^{r_i-1}\frac{1}{y_i+j}\big |.$$
 Therefore $\lim_{n\to\infty}D(F^n(x),F^n(y))$ exists which is in contradiction with $(x,y)$ being a scrambled pair.
\end{proof}
\paragraph{\bf Acknowledgment}
The author thanks to professor Jaroslav Sm\' ital for stimulating discussions and useful suggestions. Research was funded by institutional support for the development of research organization (I\v C 47813059).

\end{document}